\newcommand{\norm}[1]{\ensuremath{\left\| #1 \right\|}}
\newcommand{\bracket}[1]{\ensuremath{\left[ #1 \right]}}
\newcommand{\refeqn}[1]{(\ref{eqn:#1})}
\newcommand{\deriv}[2]{\ensuremath{\frac{\partial #1}{\partial #2}}}
\newcommand{\SO}{\ensuremath{\mathsf{SO(3)}}}
\newcommand{\T}{\ensuremath{\mathsf{T}}}
\newcommand{\so}{\ensuremath{\mathfrak{so}(3)}}
\newcommand{\SE}{\ensuremath{\mathsf{SE(3)}}}
\newcommand{\Ree}{\ensuremath{\mathbb{R}}}
\newcommand{\Sph}{\ensuremath{\mathsf{S}}}
\renewcommand{\d}{\ensuremath{\mathbf{d}}}
\newcommand{\tr}[1]{\mbox{tr}\ensuremath{\negthickspace\bracket{#1}}}
\newcommand{\diag}[1]{\mbox{diag}\ensuremath{\negthickspace\bracket{#1}}}
\newcommand{\Lya}{\text{Lyapunov}}
\title{\LARGE \bf
	Geometric Adaptive Control for a Quadrotor UAV\\ with Wind Disturbance Rejection}
\author{Mahdis Bisheban, and Taeyoung Lee%
	\thanks{Mahdis Bisheban, Ph.D Candidate, Mechanical and Aerospace Engineering, George Washington University, Washington DC 20052 {\tt mbshbn@gwu.edu}}%
	\thanks{Taeyoung Lee, Associate Professor, Mechanical and Aerospace Engineering, George Washington University, Washington DC 20052 {\tt tylee@gwu.edu}}%
	\thanks{This research been supported in part by NSF under the grants CMMI-1243000, CMMI-1335008, and CNS-1337722}
}
\newtheorem{prop}{Proposition}
\begin{document}
\allowdisplaybreaks

\maketitle \thispagestyle{empty} \pagestyle{empty}

\begin{abstract}
	This paper presents a geometric adaptive control scheme for a quadrotor unmanned aerial vehicle, where the effects of unknown, unstructured disturbances are mitigated by a multilayer neural network that is adjusted online. The stability of the proposed controller is analyzed with Lyapunov stability theory on the special Euclidean group, and it is shown that the tracking errors are uniformly ultimately bounded with an ultimate bound that can be abridged arbitrarily. A mathematical model of wind disturbance on the quadrotor dynamics is presented, and it is shown that the proposed adaptive controller is capable of rejecting the effects of wind disturbances successfully. These are illustrated by numerical examples. 
\end{abstract}

\section{Introduction}


Quadrotor unmanned aerial vehicles (UAVs) have been utilized for various applications, such as aerial manipulation, search and rescue, and have been led to numerous research and developments. To complete outdoor flight missions successfully, it is required that the quadrotors are able to fly under wind disturbances reliably. However, it is challenging to accurately control quadrotor UAVs in a windy condition~\cite{Hoffmann2007}.

Several studies has been conducted to understand wind effects on a quadrotor. More specifically, wind affects the thrust and torque magnitude, and the drag force acting on the quadrotor. Also it causes blade flapping. For example, expressions for the thrust and drag force are presented in~\cite{Gill2017} for forward flights, and it is shown that if relative wind is more than $4$ to $7\si{\meter \per \second}$, the hover model deteriorates. In~\cite{Craig_Paley_16}, the effects of horizontal wind on the blade flapping are studied. In~\cite{Tomic_Haddadin_15}, it is discussed that it is critical to identify the drag and contact forces separately to detect collision or to estimate wind. In~\cite{Bisheban_Lee_IJCAS_17,Bisheban_Lee_CCTA_17}, computational geometric approaches for system identification of the rigid body and quadrotor dynamics are proposed, to estimate the unknown model parameters.


To reject the undesired effects of wind disturbances, different approaches have been taken. In~\cite{Tran15}, computational intensive aerodynamic expressions have been solved off-line, and a look-up table to estimate wind forces and moments in real-time has been used for control in simulation. 
In~\cite{Bangura_Mahony_17}, controllers are introduced in terms of the power and current on the motor to reach the desired thrust based on the aerodynamic power and motor model. The method is used to reject axial wind effects experimentally, while measurement of current, voltage and rotor speed are required during flight. 

While these methods depend on an accurate, estimated model of wind effects, other  controllers have been proposed to compensate the unknown un-modeled dynamics. For example, an adaptive controller based on the neural network is presented in~\cite{Nicol_Ramirez-Serrano_08} for hovering flights, which is robust against sinusoidal disturbances in simulations. Later, a nonlinear PID controller on the special Euclidean group is proposed in~\cite{Goodarzi_Lee_13} to reject unknown, fixed uncertainties. In~\cite{Mellinger_Kumar_2014}, controllers and trajectories are designed to reach desired states. To compensate for un-modeled dynamics, controllers are tuned from successive experimental trials data. Considering that models which are trained from off-line data may not perform well with new wind conditions, a controller based on neural network which learns on-line is proposed in~\cite{Dierks_Jagannathan_10}. The drag force and blade flapping effects as well as an unknown disturbance term for un-modeled dynamics are considered in the model, while attitude is represented with Euler angles.

This paper aims to present a geometric adaptive nonlinear controller based on the neural network to compensate for unknown un-modeled dynamics of a quadrotor UAV. This paper extends the controller proposed in~\cite{LeeLeoPICDC10}, with two three-layer neural networks for the coupled position and attitude dynamics. The controller is able to compensate for the unstructured disturbances in real-time, as the parameters of the neural network are adjusted online according to adaptive control laws. The quadrotor dynamics are studied directly on the special Euclidean group, namely $\SE$ to avoid singularities and complexities associated with Euler angles, or the ambiguities of quaternions. This is particularly useful for  the quadrotor to perform nontrivial aggressive maneuvers in uncertain environments. Through mathematical analysis of the $\Lya$ function on $\SE$, it is shown that the tracking errors are uniformly ultimately bounded with an ultimate bound that can be reduced arbitrarily up to any desired accuracy. We further present a comprehensive aerodynamic model of wind fields on a quadrotor, including the variations in thrust and torque magnitude, blade flapping effects and the drag force. With numerical results, it is shown that the controller can compensate for the un-modeled dynamics caused by wind disturbances, so to follow the desired trajectories successfully without any prior knowledge of the aerodynamics. 

In short, the main contribution of this paper is constructing a geometric adaptive controller on $\SE$ with the neural network to compensate unstructured disturbances acting on the both of the position dynamics and the attitude dynamics. Next, the efficacy of the proposed controller is validated by a comprehensive aerodynamic model of wind gusts. 

\section{Quadrotor Dynamics under Wind Disturbance}\label{sec:QDM}

\subsection{Quadrotor Dynamics}
Consider a quadrotor aerial vehicle composed of four identical rotors and propellers located at the vertices of a square, which generate a thrust and torque normal to the plane of this square. We define  an inertial reference frame $\{\vec e_1,\vec e_2,\vec e_3\}$ and a body-fixed frame $\{\vec b_1,\vec b_2,\vec b_3\}$. The origin of the body-fixed frame is located at the center of mass of this vehicle, and its third axis is pointing downward. 

The configuration space of a quadrotor is the special Euclidean group $\SE$, which is the semi-direct product of $\SO=\{R\in\Ree^{3\times 3}\,|\, R^TR=I_{3\times 3},\mathrm{det}[R]=+1\}$ and $\Ree^3$. For given $(R,x)\in\SE$, $R\in\SO$ represents the linear transform of representation of a vector from the body-fixed frame to the inertial frame, and $x\in\Ree^3$ denotes the location of the center of mass in the inertial frame. The kinematics equations are given by
\begin{align}
\dot x  &= v,\label{eqn:EC1}\\
\dot R & = R\hat\Omega,\label{eqn:EC3}
\end{align}
where $v\in\Ree^3$ is the linear velocity in the inertial frame, and $\Omega\in\Ree^3$ corresponds to the angular velocity resolved in the body-fixed frame. The hat map $\wedge :\Ree^3\rightarrow\so$ is defined  such that $\hat x y = x\times y$ and $(\hat x)^T =-\hat x$ for any $x,y\in\Ree^3$. The inverse of the hat map is denoted by the vee map $\vee :\so\rightarrow\Ree^3$.

Let $r_j\in\Ree^3$ be the location of the $j$-th rotor in the body-fixed frame for $j\in\{1,\ldots, 4\}$, defined as follows
\begin{gather}
r_1=\begin{bmatrix} d_h\\0\\d_v \end{bmatrix}, \;  r_2=\begin{bmatrix} 0\\-d_h\\d_v \end{bmatrix}, \;  r_3=\begin{bmatrix} -d_h\\0\\d_v \end{bmatrix}, \; r_4=\begin{bmatrix} 0\\d_h\\d_v \end{bmatrix},
\end{gather}
where $d_h,d_v\in\Ree$ specify the horizontal and vertical distances from the origin of the body-fixed frame to the rotors. In the absence of the relative wind, the $j$-th rotor generate the thrust $-T_je_3$ when resolved in the body-fixed frame, where $T_j\in\Ree$ denotes the magnitude of thrust.

Let the mass and the inertia matrix of the quadrotor be $m\in\Ree$, and $J\in\Ree^3$, respectively. The equations of motion are given by~\refeqn{EC1}--\refeqn{EC3} along with	
\begin{gather}
m\dot{v}=U_{e},\label{eqn:EC2}\\
J\dot \Omega + \Omega\times J\Omega = M_e,\label{eqn:EC4}
\end{gather}
where $U_e,M_e\in\Ree^3$ are the resultant force and the resultant moment acting on the quadrotor. Here, it is considered that $U_e$ is resolved in the inertial frame, and $M_e$ is resolved in the body-fixed frame. These includes the effects of the gravity, the aerodynamic forces, and the perturbation of the thrust due to the wind disturbances, and are formulated as follows. 

\begin{table}\centering
	\caption{Notations}\label{tab:notations}
	\begin{tabular}{cc}\hline
		Notation& Refers to  
		\\\hline
		$\hat{}$  & hat map $\hat{.}:\Ree^3\rightarrow\so$\\
		$\vee$ & vee map $.^\vee:\so\rightarrow\Ree^3$ \\
		$\bar{}$ & estimated value \\
		$\tilde{}$ & estimation error value  \\
		$\norm{}$ & Frobenius norm of a matrix, and 2-norm of a vector \\ 
		$\lambda_m{()}$ & minimum eigen value of a matrix \\	
		$\lambda_M{()}$ & maxmum eigen value of a matrix \\		\hline
	\end{tabular}
\end{table}

\subsection{Effects of Wind Disturbance}\label{Sec:Wind_effects}

Let $v_w\in\Ree^3$ be the velocity of the wind in the ambient atmosphere, resolved in the inertial frame. The relative wind on the $j$-th rotor in the body-fixed frame, namely $v_{w_j}\in\Ree^3$ is given by
\begin{equation}
v_{w_j}=R^T (v_w- v)+ \hat\Omega r_j\equiv[u_{1_j},u_{2_j},u_{3_j}]^T,\label{eqn:vrel}
\end{equation}
where the last two terms are caused by the motion of the quadrotor, and $u_{1_j}, u_{2_j}, u_{3_j} \in \Ree$ denote the components of $v_{w_j}$. 

The external resultant force acting on the quadrotor is given by
\begin{gather}
U_e=mge_3+D+R \Sigma_{j=1}^4 T_j d_j,\label{eqn:Uewind}
\end{gather}
where $mge_3$ is the gravitational force, and $D\in\Ree^3$ corresponds to the drag which is assumed to act on the center of mass in the opposite direction to the relative velocity of the quadrotor with respect to the wind. For a  positive drag coefficient $C_d\in\Ree$, it is given by
\begin{gather}
D=-C_d ||v-v_{w}|| (v-v_{w}).\label{eqn:Drag}
\end{gather}
The magnitude of the thrust for the $j$-th rotor is
\begin{gather}
T_j=C_{T_j}\rho A_p (r_p\omega_j)^2,\label{eqn:Twind}
\end{gather}
where  $C_{T_j}\in\Ree$ is the thrust coefficient, and $\rho\in\Ree$ is the air density. The constants $r_p$, $A_p=(\pi r_p)^2$, and $\omega_j$ are the radius, the sweeping area, and rotating speed of the $j$-th rotor, respectively. In \refeqn{Uewind}, the unit vector $d_j \in \Sph^2$ is the direction of $j$-th rotor thrust resolved in the body-fixed frame.  

The effects of wind are composed of the induced velocity that determines the thrust coefficient, and the blade flapping effects that alters the direction of the thrust. First, The thrust coefficient $C_{T_j}\in \Ree$ and the inflow ratio $\lambda_j\in \Ree$, which is the induced air velocity divided by the tip speed, are given by
\begin{align}
C_{T_j}&=\frac{s C_{l\alpha}}{2}[\theta_0(\frac{1}{3}+\frac{\mu_{x_j}^2}{2})-\frac{1}{2}(\lambda_j+\mu_{z_j})],\label{eqn:CT}\\
\lambda_j&=\frac{C_{T_j}}{2\sqrt{\mu_{x_j}^2+(\lambda_j+\mu_{z_j})^2}},
\end{align}
where $s=\frac{N_b c}{\pi r_p}\in\Ree$ is the solidity ratio which is the blade area approximated by $N_b c r_p$ divided by the blade sweeping area $\pi r_p^2$. Here, $c$ is the blade chord, and $N_b$ is the number of blades. The constants $C_{l\alpha},\theta_0\in\Ree$ are the blade lift curve slope and blade pitch angle respectively~\cite{Padfield2007}. The advance ratio parallel and perpendicular to the rotor plane are given by
\begin{gather}
\mu_{x_j}=\frac{\sqrt{u_{1_j}^2+u_{2_j}^2}}{\omega_j r_p},\\
\mu_{z_j}=\frac{u_{3_j}}{\omega_j r_p}.
\end{gather}
The expressions for the thrust coefficient $C_{T_j}$ and the inflow ratio $\lambda_j$ are implicit, and can be computed using Newton's iterative method~\cite{Padfield2007}.

Next, as the result of the blade flapping, the direction of the $j$-th rotor thrust changes from $-e_3$ to
\begin{gather}
d_{j}=\begin{bmatrix}\frac{-\sin{\alpha_j}}{\sqrt{u^2_{1_j}+u^2_{2_j}}} u_{1_j}\\ \frac{-\sin{\alpha_j}}{\sqrt{u^2_{1_j}+u^2_{2_j}}} u_{2_j}\\-\cos{\alpha_j} \end{bmatrix},\label{eqn:d_T}
\end{gather}
resolved in the body-fixed frame, where $\alpha_j$ is the blade flapping angle of the $j$-th rotor, resulting in the force component in the $b_1-b_2$ plane. This angle is approximated with
\begin{gather}
\alpha_j=C_{\alpha} \sqrt{u^2_{1_j}+u^2_{2_j}},\label{eqn:alpha}
\end{gather}
where $C_{\alpha}\in\Ree$ is the fixed flapping angle coefficient~\cite{Hoffmann2011b,Sydney2013}.

The external resultant moment consists of the moments due to the rotor thrusts, the blade flapping, and the reaction torques, i.e., 
\begin{align}
M_e=&\Sigma_{j=1}^4 r_j \times T_j d_j+(-1)^{j+1}Q_jd_j\nonumber\\
&+\frac{N_b}{2}K_\beta \alpha_j (d_j\cdot e_1+d_j\cdot e_2),\label{eqn:Mewind}\\
Q_j=&C_{Q_j}\rho A_p r_p(r_p\omega_j)^2,\label{eqn:Qwind}
\end{align}
where $K_\beta\in\Ree$ is the stiffness of the rotor blade~\cite{Hoffmann2011b},~\cite{Padfield2007} and $C_{Q_j}\in \Ree$ is the torque coefficient given by 
\begin{gather}
C_{Q_j}=C_{T_j}(\lambda_j+\mu_{z_j})+\frac{C_{D_0}s}{8}(1+3\mu_{x_j}^2), \label{eqn:CQ}
\end{gather}
where $C_{D_0}\in\Ree$ is the blade drag coefficient~\cite{Padfield2007}.

\section{Geometric Adaptive Controller}

In this section, we present a geometric adaptive control system for a quadrotor to reject the effects of wind disturbances without the knowledge of the aerodynamic model presented in Section~\ref{Sec:Wind_effects}. 

\subsection{Simplified Dynamic Model}

We first formulate a simplified dynamic model for the control system development, where the effects of wind are considered as unstructured, unknown disturbances to the quadrotor. 

In other words, it is assumed that
\begin{gather}
T_j^\prime=C_T^\prime\omega_j^2,\quad
Q_j^\prime=C_Q^\prime\omega_j^2\equiv C_{TQ}T_j^\prime,\\
D=0,\quad d_j=-e_3,
\end{gather}
where $T_j^\prime,Q_j^\prime\in \Ree$ are the magnitude of the thrust and torque of the $j$-th rotor respectively. The constants $C_T^\prime,C_Q^\prime\in \Ree$ are the identical constant thrust and torque coefficients for all rotors, and $C_{TQ}\in \Ree$ describes the reactive torque based on the rotor thrust. Here, the superscript $\prime$ is used to distinguish the model parameters which is used for controller from the model parameters presented in Section~\ref{Sec:Wind_effects}.  The corresponding resultant force and moment are given by
\begin{gather}
U_e^\prime=mge_3-fRe_3,\label{eqn:E3}\\
M_e^\prime=-\Sigma_{j=1}^4 r_j \times T_j^\prime e_3-(-1)^{j+1}Q_j^\prime e_3,\label{eqn:E4}
\end{gather}
where $f=\Sigma_{j=1}^4T_j\in\Ree$ is the sum of four rotor thrusts. As we are aware that the model, which is used to design the controller in this section, is not accurate and differs from the physical quadrotor or the model presented in~\ref{Sec:Wind_effects}, we consider two unknown terms $\Delta_1,\Delta_2\in\Ree^3$ as the model errors. More precisely, the simplified dynamic model considered for the control system development is given by
\begin{gather}
m \dot{v}=U_e^\prime-\Delta_1,\label{eqn:E1}\\
J \dot{\Omega}+\Omega \times J \Omega=M_e^\prime-\Delta_2.\label{eqn:E2}
\end{gather}
with the kinematics equations~\refeqn{EC1}--\refeqn{EC3}.

\subsection{Tracking Problem Formulation}

Suppose that there is a desired position trajectory of the quadrotor, namely $x_d(t)\in\Ree^3$ given as a smooth function of time. We wish to develop a control system such that the controlled trajectory errors are uniformly ultimately bounded. 

The proposed control system extends the geometric tracking control scheme presented in~\cite{LeeLeoPICDC10}, by including a neural-network based adaptive control term. The key idea is that by utilizing the universal function approximation property of a multilayer neural network, we mitigate the effects of the unknown disturbances by adjusting the weights of neural network according to an adaptive control law. 

More explicitly, the proposed control input for the total thrust $f$ and torque $M_c$ are given by
\begin{align}
f=&-A^T Re_3,\label{eqn:f}\\
A=&\bar \Delta_1-k_x e_x-k_v e_v -mge_3+m \ddot x_d,\label{eqn:A}\\
M_c=&\bar \Delta_2-k_R e_R-k_\Omega e_\Omega+\Omega \times J \Omega\nonumber\\
&-J(\hat \Omega R^T R_c \Omega_c-R^T R_c \dot{\Omega}_c),\label{eqn:Mc}
\end{align}
where $k_x,k_v,k_R,k_\Omega$ are positive constants. $\bar \Delta_1, \bar \Delta_2$ are the adaptive control terms, and are defined in Section~\ref{Sec:ANN}. The matrix $R_c\in\SO$, and the vectors $\Omega_c,\dot{\Omega}_c\in\Ree^3$ are the computed rotation matrix, angular velocity, and its first derivative. 

The control moment $M_c$ is designed such that the actual rotation matrix $R$ follows the computed rotation matrix $R_c$. The three columns of $R_c$ are denoted by $R_c\equiv[b_{1c},b_{2c},b_{3c}]$, where $b_{1c},b_{2c},b_{3c}\in\Sph^2$. To design $R_c$, first, we set $b_{3c}=-\frac{A}{||A||}$ for $A$ defined in \refeqn{A}. This is to generate the total thrust to follow the given desired position trajectory. To ensure $R_c\in\SO$, $b_{1c}$ must be orthogonal to $b_{3c}$. So, there is one-dimensional degree of choice to select $b_{1c}$ which is actually the heading direction, more specifically the yaw angle.  We define $b_{1d}\in\Sph^2$ such that $b_{1d}\times b_{3c}\neq0$. Then, we restrict $b_{1c}$ to be the projection of $b_{1d}$ on the plane perpendicular to $b_{3c}$. More explicitly,
\begin{gather}
R_c=[b_{2c}\times b_{3c},-\frac{C}{||C||},-\frac{A}{||A||}],\label{eqn:Rc}\\
C=-b_{3c} \times b_{1d},\label{eqn:C}
\end{gather}
It is assumed that $\norm{A}\neq0$, $\norm{\bar\Delta_1}\leq\delta_1$, where $\delta_1$ is positive, and the command acceleration is uniformly bounded such that
\begin{gather}\norm{-mge_3+m \ddot x_d+\bar \Delta_1}\leq B_1,\label{eqn:acceleration_bound}
\end{gather} 
for a given positive constant $B_1$. Next, the computed angular velocity is defined as
\begin{gather}
\Omega_c=(R_c^T\dot{R}_c)^\vee
\end{gather}

The control inputs given in \refeqn{A} and \refeqn{Mc} are also dependent of the tracking errors $e_x,e_v,e_R,e_v\in\Ree^3,\Psi\in \Ree$ are defined as
\begin{gather}
e_x=x-x_d,\label{eqn:ex}\\
e_v=v-\dot{x}_d,\label{eqn:ev}\\
e_R=\frac{1}{2}(R_c^TR-R^TR_c)^\vee,\label{eqn:eR}\\
e_\Omega=\Omega-R^TR_c\Omega_c,\label{eqn:eomega}\\
\Psi(R,R_c)=\frac{1}{2}\tr{I_{3\times 3}-R_c^TR},\label{eqn:Psi}
\end{gather}
where $\Psi$ is positive-definite about $R=R_c$, that is a unique critical point in $\mathcal{D}_0=\{R\in \SO|\Psi(R,R_c)<2\}$~\cite{LeeLeoPICDC10}. 

Having $f$ and $M_c\equiv[M_1,M_2,M_3]^T$ determined by controller, the four rotor thrusts can be computed using
\begin{align}
\begin{bmatrix} T_1^\prime\\T_2^\prime\\T_3^\prime\\T_4^\prime\end{bmatrix}=\begin{bmatrix}1&1&1&1\\ 0&-d_h&0&-d_h\\d_h&0&-d_h&0\\-C_{TQ}&C_{TQ}&-C_{TQ}&C_{TQ}  \end{bmatrix}^{-1} \begin{bmatrix} f\\M_1\\M_2\\M_3\end{bmatrix}.
\end{align}

\subsection{Adaptive Neural Network Structure}\label{Sec:ANN}

In this section, we present how the adaptive control terms are defined in~\refeqn{A} and \refeqn{Mc}, which are the main contribution of this paper.  

According to the universal approximation theorem~\cite{lewis1998neural}, a multilayer artificial neural network can approximate any continuous function up to an arbitrary accuracy. Specifically, there are ideal constant weight matrices $W$ and $V$, and the number of hidden layers such that approximation of the function for a given desired positive accuracy $\varepsilon_N$ is given by 
\begin{gather}
\Delta(x_{nn})=W^T \sigma(V^T x_{nn})+\varepsilon{(x_{nn})},
\end{gather}
where $x_{nn}$ is the input vector to the neural network, $\sigma$ is the activation function, and the error satisfies $\norm{\varepsilon{(x_{nn})}}\leq\varepsilon_N$.

Here, to compensate for the uncertainties $\Delta_1,\Delta_2$ presented in~\refeqn{E1}--\refeqn{E2}, we use two neural networks: one for the position dynamics and another for the attitude dynamics. Throughout the paper, the subscript $i=1$ is to refer to the position dynamics, and $i=2$ is to refer to the attitude dynamics. As we have no information about $\Delta_i$ in~\refeqn{E1}--\refeqn{E2}, we use the estimates of the ideal weights in the control system such that $\bar \Delta_i=\bar W_i^T \sigma(\bar{z}_i)$ where $\bar{z}_i=\bar V_i^T x_{nn_i}$. The number of neurons in the first, second or hidden, and the last layers are denoted by $N_{1_i},N_{2_i},N_{3_i}$ respectively. Thus, $W_i \in \Ree^{N_{2_i}+1, N_{3_i}}$, $V_i \in \Ree^{N_{1_i}+1, N_{2_i}}, x_{nn_i} \in \Ree^{N_{1_i}+1}$. 

We define $Z_i=\diag{W_i,V_i}\in \Ree^{N_{2_i}+N_{1_i}+2, N_{2_i}+N_{3_i}}$ and the errors in the neural network as follows
\begin{gather}
\tilde{W_i}=W_i-\bar W_i,\quad \tilde{V_i}=V_i-\bar V_i,\quad \tilde{Z_i}=Z_i-\bar Z_i.\label{eqn:errorNN}
\end{gather}
The weights are bounded using the gradient projection method~\cite{IoannouRobustadaptiveComtrol} such that 
\begin{gather}
 ||W_i|| \leq W_{M_i}, \quad  ||V_i|| \leq V_{M_i},\quad ||Z_i|| \leq Z_{M_i},\label{eqn:NNbound}
\end{gather}
where $W_{M_i},W_{M_i},W_{M_i} \in \Ree^{+}$. The output error of the neural network $\tilde{\Delta}_i=\Delta_i-\bar{\Delta}_i$ is written as
\begin{gather}
\tilde{\Delta}_i=\tilde W_i^T [\sigma(\bar{z}_i)-\sigma^\prime(\bar{z}_i)\bar{z}_i]+\bar W_i^T \sigma^\prime(\bar{z}_i)\tilde{z}_i-w_i,\label{eqn:OENN}\\
w_i=-\tilde W_i \sigma^\prime(\bar{z}_i) z_i-W_i^T\mathcal{O}_i-\varepsilon(x_{nn_i}),\label{eqn:wi}\\
\mathcal{O}_i=\sigma(z_i)-\sigma(\bar{z}_i)-\sigma^\prime(\bar{z}_i) \tilde{z}_i,\label{eqn:O}
\end{gather}
 where $z_i= V_i^T x_{nn_i},\,\tilde{z}_i= \tilde{V}_i^T x_{nn_i}$, and $\sigma: \Ree^{N_{2_i}} \mapsto \Ree^{N_{2_i}+1} $. $\sigma(z_i)=[1,\varsigma_1,\ldots,\varsigma_{N_{2_i}}]^T$  contains the sigmoid activation functions $\varsigma_k=\frac{1}{1+e^{-z_k}}$, where $k\in{1,\ldots,N_{2_i}}$ and $z_k$s are the elements of the vector $z_i$, and $\sigma^\prime(z_i)=[0,\deriv{\varsigma_1}{z_1},\ldots,\deriv{\varsigma_{N_{2_i}}}{z_{N_{2_i}}}]^T$, where $\deriv{\varsigma_k}{z_k}=\varsigma_k(1-\varsigma_k)$. One can show $w_i$ is bounded such that
\begin{gather}
\norm{w_i}\leq C_{1_i}+\norm{\tilde{Z}_i}(C_{2_i}+C_{3_i} \norm{x_{1_i}}+C_{4_i} \norm{x_{2_i}}),
\end{gather} 
where $C_{k_i}, k\in{1,\ldots,4}$ are positive constants~\cite{LeeKimJGCD01}. The input to the neural networks are defined as
\begin{gather}
x_{nn_i}=[1,x_{1_i},x_{2_i}],\\
x_{1_1}=x,\quad x_{2_1}=v,\label{eqn:x12_1}\\ 
x_{1_2}=E(R)^T,\quad x_{2_2}=\Omega,\label{eqn:x12_2}
\end{gather}
where $E(R)=[\theta, \phi, \psi]$ contains the Euler angles from the rotation matrix $R$. 

In the proposed adaptive control system, the neural network weights are updated according to
\begin{gather}
\dot{\bar{W_i}}=-\gamma_{w_i}[\sigma(z_i)a_i^T- \sigma^\prime(z_i)z_ia_i^T]-\kappa_i \gamma_{w_i} \bar W_i,\label{eqn:Wdot}\\
\dot{\bar{V_i}}=-\gamma_{v_i}x_{nn_i}[ \sigma^\prime(z_i)^T\bar W_ia_i]^T-\kappa_i \gamma_{v_i} \bar V_i,\label{eqn:Vdot}\\
a_1=e_v+c_1 e_x,\quad a_2=e_\Omega+c_2  e_R,\label{eqn:a1a2}
\end{gather}
where $\gamma_{w_i}$, $\gamma_{v_i}$ and $\kappa_i$ are the positive design parameters.

The stability properties of the proposed control system are summarized as follows. 

\begin{prop}{}\label{prop:2}
Consider the control force $f$ and moment $M_c$ defined at \refeqn{f}, \refeqn{Mc}. Suppose that the initial condition satisfies
	\begin{align}
	\Psi(R(0),R_d(0))\leq \psi_1 < 1,\label{eqn:Psi0}\\
	\norm{e_x(0)}<e_{x_{max}},
	\end{align}
	for fixed constants $\psi_1$ and $e_{x_{max}}$. For any positive constants $k_x,k_v$, we choose positive constants $c_1,c_2,k_R,k_\Omega,\gamma_{w_1},\gamma_{w_2},\gamma_{v_1},\gamma_{v_2},\kappa_1,\kappa_2$ such that they satisfy~\refeqn{c10},~\refeqn{c20}, and~\refeqn{N1}--\refeqn{N3} become positive-definite.
	
Then, all of the tracking errors of the quadrotor UAV, as well as the neural network weight errors are uniformly ultimately bounded with the ultimate bound that can be abridged arbitrary through~\refeqn{D}.
\end{prop}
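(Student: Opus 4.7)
The plan is to construct a composite Lyapunov function on $\SE\times\Ree^{N_W}\times\Ree^{N_V}$ that simultaneously captures the position tracking error, the attitude tracking error on the special Euclidean group, and the neural-network weight estimation errors, then to show that its time derivative along the closed loop is bounded above by a negative-definite quadratic form in the error norms plus a constant, from which uniform ultimate boundedness follows by standard arguments.

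Concretely, I would choose
\begin{align*}
\mathcal{V} = &\;\tfrac{1}{2}k_x\norm{e_x}^2+\tfrac{1}{2}m\norm{e_v}^2+c_1 m\, e_x\!\cdot\! e_v \\
&+\tfrac{1}{2}e_\Omega\!\cdot\! J e_\Omega+k_R\Psi(R,R_c)+c_2\, e_R\!\cdot\! J e_\Omega \\
&+\sum_{i=1}^{2}\Big(\tfrac{1}{2\gamma_{w_i}}\tr{\tilde W_i^T\tilde W_i}+\tfrac{1}{2\gamma_{v_i}}\tr{\tilde V_i^T\tilde V_i}\Big).
\end{align*}
First, I would verify that on the invariant sublevel set $\Psi\le\psi_1<1$ the function $\Psi$ is quadratically sandwiched by $\norm{e_R}^2$, so that the translational and rotational blocks can be bounded below and above by quadratic forms in $(\norm{e_x},\norm{e_v})$ and $(\norm{e_R},\norm{e_\Omega})$ respectively. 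The conditions \refeqn{c10}, \refeqn{c20} are exactly what forces the cross terms weighted by $c_1,c_2$ to preserve positive definiteness of these $2\times2$ bounding matrices, and they also keep the attitude dynamics in the well-conditioned region required for the geometric error.

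Next I would differentiate $\mathcal{V}$ along the closed-loop trajectories given by \refeqn{EC1}--\refeqn{EC3}, \refeqn{E1}--\refeqn{E2} with $f,M_c$ from \refeqn{f}--\refeqn{Mc}. The translational and rotational parts produce the familiar clean negative-definite terms $-k_v\norm{e_v}^2-c_1k_x\norm{e_x}^2$ and $-k_\Omega\norm{e_\Omega}^2-c_2k_R\norm{e_R}^2$, plus cross-coupling terms from the $e_R$-dependence of the thrust direction (captured through the bound \refeqn{acceleration_bound}) and a residual term $-\tilde\Delta_i\cdot a_i$ for $i=1,2$ coming from the model error. Substituting the update laws \refeqn{Wdot}--\refeqn{Vdot} into the derivatives of the NN-weight blocks, the output-error decomposition \refeqn{OENN}--\refeqn{O} cancels the sign-indefinite parts and leaves the $\sigma$-modification contribution $-\kappa_i\tr{\tilde W_i^T\bar W_i}-\kappa_i\tr{\tilde V_i^T\bar V_i}$. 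Using $\tilde Z_i^T\bar Z_i=\tilde Z_i^T(Z_i-\tilde Z_i)$ together with the projection bound \refeqn{NNbound} converts this into $-\kappa_i\norm{\tilde Z_i}^2+\kappa_i Z_{M_i}\norm{\tilde Z_i}$, which is negative outside a ball in $\tilde Z_i$. The remaining reconstruction error $w_i$ is absorbed via Young's inequality using its linear-in-$\norm{\tilde Z_i}$ bound and the available quadratic terms in $\norm{a_i}$.

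At this point I would assemble everything as $\dot{\mathcal{V}}\le -z^T N z + \beta$, where $z=(\norm{e_x},\norm{e_v},\norm{e_R},\norm{e_\Omega},\norm{\tilde Z_1},\norm{\tilde Z_2})$ and $N$ is block-structured; the positive-definiteness hypotheses \refeqn{N1}--\refeqn{N3} from the proposition are precisely what guarantee $\lambda_m(N)>0$ after completion of squares, while the constant $\beta$ collects the approximation error $\varepsilon_N$ and the $\kappa_i Z_{M_i}^2$ leakage. Combined with the quadratic sandwich on $\mathcal{V}$, this yields $\dot{\mathcal{V}}\le -\alpha\mathcal{V}+\beta$, hence UUB with ultimate bound proportional to $\beta/\alpha$, which can be made arbitrarily small by the design freedom indicated in \refeqn{D}. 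Finally, I would check that the sublevel set $\{\mathcal{V}\le\mathcal{V}(0)\}$ remains inside $\Psi\le\psi_1<1$ and $\norm{e_x}\le e_{x_{\max}}$, so the analysis is self-consistent. The main obstacle I anticipate is the bookkeeping for the rotational cross term: bounding $\norm{A}$ and the $e_R$-induced perturbation of the thrust direction in terms of $\norm{e_R}$ alone, while keeping the coefficients compatible with the positive-definiteness of $N$, since this is where the three conditions \refeqn{N1}--\refeqn{N3} get their specific form and where the coupling between the position loop and the attitude loop is most delicate.
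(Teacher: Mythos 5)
Your proposal is correct and follows essentially the same route as the paper's proof: the same composite Lyapunov function $\mathcal{V}=\mathcal{V}_1+\mathcal{V}_2$ with the cross terms $mc_1 e_x^Te_v$ and $c_2 e_R^TJe_\Omega$ plus the weighted traces of $\tilde W_i^T\tilde W_i$ and $\tilde V_i^T\tilde V_i$, the same cancellation via the update laws and the decomposition \refeqn{OENN}--\refeqn{O} leading to $-\tfrac{\kappa_i}{2}\norm{\tilde Z_i}^2+\tfrac{\kappa_i Z_{M_i}^2}{2}$ plus a term linear in $\norm{a_i}$, and the same assembly into $\dot{\mathcal{V}}\le -\nu\mathcal{V}+C_5$ with the positive-definiteness of \refeqn{N1}--\refeqn{N3} (the paper splits your single matrix $N$ into three $3\times3$ blocks, with $\mathcal{N}_3$ carrying the position--attitude coupling you correctly flag as the delicate part). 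No gaps of substance.
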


\begin{proof} 
The proof is presented in Appendix. To see the definition of uniformly ultimately bounded errors refer to~\cite{khalil1996noninear}.
\end{proof}

\appendix
\subsection{Identities}
In this section, selected identities that are used throughout the proof are presented.

For any $\mathcal{A}\in \Ree^{3\times 3},x, y\in\Ree^{3}$, $c_1, c_2,c_3\in \Ree$, 
\begin{gather}
\tr{ y x^T}=x^Ty,\label{eqn:tryx}\\
||x+y||\leq||x||+||y||,\label{eqn:x_y}\\
-c_1x^2+c_2x=-\frac{c_1}{2}x^2-\frac{c_1}{2}[x-\frac{c_2}{c_1}]^2+\frac{c_2^2}{2c_1}\nonumber\\
\leq-\frac{c_1}{2}x^2+\frac{c_2^2}{2c_1},\label{eqn:ax2+bx}\\
-c_1x^2-c_2xy-c_3y^2\leq-c_1x^2+c_2xy-c_3y^2.\label{eqn:quad}
\end{gather}
A positive-definite function denoted by $\mathcal{V}_{0_i}$, and its derivative, which are used in the subsequent stability analysis, are defined as 
\begin{gather}
\mathcal{V}_{0_i}=\frac{1}{2\gamma_{w_i}}\tr{\tilde W_i^T \tilde W_i}+\frac{1}{2\gamma_{v_i}}\tr{\tilde V_i^T \tilde V_i},\label{eqn:Lyap0}\\
\dot{\mathcal{V}}_{0_i}=\frac{1}{\gamma_{w_i}}\tr{\tilde W_i^T \dot{\tilde{W}}_i}+\frac{1}{\gamma_{v_i}}\tr{\tilde V_i^T \dot{\tilde{V}}_i},\label{eqn:Lyap0_dot}
\end{gather} 
where $\tilde W_i,\,\tilde V_i$ are defined in~\refeqn{errorNN}. 

Next, we find the upper bound of the following expression, defined as $\mathcal{B}_i$
\begin{align}
\mathcal{B}_i&\equiv -a_i^T(\tilde\Delta_i)+\dot{\mathcal{V}}_{0_i}.\label{eqn:B}
\end{align}
The error dynamics of the neural network weights from~\refeqn{errorNN} are give by
\begin{gather}
\dot{\tilde{W}}_i=-\dot{\bar{W}}_i,\quad
\dot{\tilde{V}}_i=-\dot{\bar{V}}_i.\label{eqn:tildeWV}
\end{gather}
We substitute~\refeqn{Wdot}--\refeqn{Vdot} in~\refeqn{tildeWV}, and the result in~\refeqn{Lyap0_dot}, then using~\refeqn{OENN}, $\mathcal{B}_i$ is rewritten as
\begin{align}
\mathcal{B}_i&=a_i^T\{-\tilde W_i^T [\sigma(z_i)-\sigma^\prime(z_i)z_i]-\bar W_i^T \sigma^\prime(z_i)\tilde{z}_i+w_i\}\nonumber \\
&\quad+\tr{\tilde W_i^T[\sigma(z_i)a_i^T-\sigma^\prime(z_i)z_ia_i^T+\kappa_i \bar W_i]} \nonumber \\
&\quad+\tr{\tilde V_i^T\{x_{nn_i}[\sigma^\prime(z_i)^T \bar W_i a_i]^T+\kappa_i \bar V_i \}}.
\end{align}
Applying~\refeqn{tryx}, it reduces to
\begin{gather}
\mathcal{B}_i=\kappa_i \tr{\tilde Z_i^T \bar Z_i}+a_i^T(w_i).\label{eqn:Bi_init}
\end{gather}
The following is hold for $\tr{\tilde Z_i^T \bar Z_i}$
\begin{gather}
\tr{\tilde Z_i^T \bar Z_i}=\tr{\tilde Z_i^T Z_i}-\tr{\tilde Z_i^T \tilde Z_i}\leq||\tilde Z_i|| Z_{M_i}-||\tilde Z_i||^2.
\end{gather}
\refeqn{ax2+bx} implies
\begin{gather}
- ||\tilde Z_i||^2+Z_{M_i} ||\tilde Z_i||\leq-\frac{1}{2}||\tilde Z_i||^2+\frac{ Z_{M_i}^2}{2}.  \label{eqn:traceZtilde}
\end{gather}
Since $\norm{\sigma}\leq 1,\, \norm{\sigma^\prime}\leq 0.25$, it can be shown that the upper bound for~\refeqn{O} is
\begin{gather}
\norm{\mathcal{O}_i} \leq 2+0.25\norm{\tilde V_i} \norm{x_{nn_i}}.
\end{gather}
From~\refeqn{NNbound}, the upper bound of~\refeqn{wi} is
\begin{align}
\norm{w_i}\leq&
0.25 V_{M_i} \norm{\tilde W_i} \norm{x_{nn_i}}+W_{M_i} \norm{\mathcal{O}_i}+\epsilon_i.\label{eqn:normw}
\end{align}
Since $\norm{x_{nn_i}}\leq 1+\norm{x_{1_i}}+\norm{x_{2_i}}$, $\norm{\tilde Z_i}\geq\norm{\tilde W_i},\,\norm{\tilde Z_i}\geq\norm{\tilde V_i}$,~\refeqn{NNbound}, and substituting $\norm{\mathcal{O}_i}$, we get 
\begin{gather}
\norm{w_i}\leq C_{1_i}+C_{2_i}||\tilde Z_i|| (1+\norm{x_{1_i}}+\norm{x_{2_i}}),
\end{gather}
where $C_{2_i}\geq 0.25 (V_{M_i}+W_{M_i}),\,C_{1_i}\geq 2W_{M_i}+\epsilon_i$. 

Substituting~\refeqn{traceZtilde} and~\refeqn{normw} in~\refeqn{Bi_init} results in
\begin{align}
\mathcal{B}_i\leq&-\frac{\kappa_i}{2}||\tilde Z_i||^2+\frac{\kappa_i Z_{M_i}^2}{2}\nonumber\\&+\norm{a_i}\{C_{1_i}+C_{2_i}||\tilde Z_i|| (1+\norm{x_{1_i}}+\norm{x_{2_i}})\}.\label{eqn:last_term}
\end{align}

\subsection{Position Error Dynamics} \label{Apen:position}
In this section, we analyze the error dynamics for the position tracking command, which will be integrated with the attitude error dynamics in Appendix~\ref{Apen:quadrotor} for the stability proof of the complete dynamics. 

Taking derivative of~\refeqn{ex}--\refeqn{ev} and substituting~\refeqn{E3} and~\refeqn{E1}, the error dynamics are defined as
\begin{gather}
\dot{e}_x=e_v,\\
m\dot{e_v}=mge_3-\Delta_1-fRe_3-m\ddot{x}_d.\label{eqn:mevdot1}
\end{gather}
Define $\mathcal{X}\in\Ree^3$ as
\begin{gather}
\mathcal{X}\equiv\frac{f}{e_3^TR_c^TRe_3}[(e_3^TR_c^TRe_3)Re_3-R_ce_3],
\end{gather}
where $e_3^TR_c^TRe_3>0$~\cite{LeeLeoPICDC10}. Equation \refeqn{mevdot1} is rewritten as
\begin{gather}
m\dot e_v=mge_3-\Delta_1-m\ddot x_d-\frac{f}{e_3^TR_c^TRe_3}R_ce_3-\mathcal{X}.\label{eqn:mevdot2}
\end{gather}
Since $b_{3c}=R_ce_3=\frac{-A}{\norm{A}}$, $f=-A^TRe_3$, we can conclude that $f=(\norm{A}R_ce_3)^TRe_3$, therefore
\begin{gather}
-\frac{f}{e_3^TR_c^TRe_3}R_ce_3=A.\label{eqn:newA}
\end{gather}
Then substituting~\refeqn{newA},~\refeqn{A} in~\refeqn{mevdot2}, the velocity error dynamics is given by
\begin{gather}
m\dot e_v=-k_x e_x-k_ve_v-\tilde \Delta_1-\mathcal{X}.\label{eqn:mevdot}
\end{gather}

Next, we find the upper bound of $\mathcal{X}$. From~\refeqn{newA}, $\norm{A}=\norm{-\frac{f}{e_3^TR_c^TRe_3}R_ce_3}$. $R_ce_3$ is the unit vector, so $\norm{A}=\norm{-\frac{f}{e_3^TR_c^TRe_3}}$. Consequently, the norm of $\mathcal{X}$ can be written as
\begin{gather}
\norm{\mathcal{X}}=\norm{A}\norm{[(e_3^TR_c^TRe_3)Re_3-R_ce_3}.
\end{gather}
Also, it is shown that $\norm{[(e_3^TR_c^TRe_3)Re_3-R_ce_3}\leq\norm{e_R}\leq\beta<1$, where $\beta=\sqrt{\psi_1(2-\psi_1)}$~\cite{LeeLeoPICDC10}. Substituting~\refeqn{A} and~\refeqn{acceleration_bound}, the upper bound of $\norm{\mathcal{X}}$ is given by
\begin{gather}
\norm{\mathcal{X}}\leq(k_x\norm{e_x}+k_v\norm{e_v}+B_1)\norm{e_R}.\label{eqn:normX}
\end{gather}

For a non-negative constant $c_1$, the Lyapunov function for the position dynamics is chosen as
\begin{gather}
\mathcal{V}_1=\frac{1}{2}k_x e_x^T e_x+\frac{1}{2}m e_v^T e_v+m c_1 e_x^T e_v+\mathcal{V}_{0_1},\label{eqn:V1}
\end{gather}
where $\mathcal{V}_{0_1}$ is given by~\refeqn{Lyap0}. It is straightforward to show
\begin{gather}
\lambda_m(\mathcal{M}_{11}) ||\mathcal{Z}_{11}||^2+\mathcal{V}_{0_1}\leq\mathcal{V}_1 \leq \lambda_M(\mathcal{M}_{12}) ||\mathcal{Z}_{11}||^2+\mathcal{V}_{0_1}, \label{eqn:V1bound}
\end{gather}
where
\begin{gather}
\mathcal{M}_{11}=\frac{1}{2}\begin{bmatrix} k_x & -mc_1
\\ -mc_1 & m \end{bmatrix},\quad\mathcal{M}_{12}=\frac{1}{2}\begin{bmatrix} k_x & mc_1
\\ mc_1 & m \end{bmatrix},\label{eqn:M1}\\
\mathcal{Z}_{11}=[||e_x||,||e_v||]^T.\label{eqn:Z11}
\end{gather} 
If $c_1$ is sufficiently small such that
\begin{gather}
c_1<\sqrt{\frac{k_x}{m}},\label{eqn:c10}
\end{gather}
then $\mathcal{M}_{11},\mathcal{M}_{12}$ are positive-definite. 

Taking derivate of the $\Lya$ function results in
\begin{align}
\dot{\mathcal{V}}_1=&k_x e_v^Te_x+(e_v+c_1e_x)^Tm\dot e_v+mc_1e_v^Te_v+\dot{\mathcal{V}}_{0_1},\label{eqn:vdot_mid1}
\end{align}
where $\dot{\mathcal{V}}_{0_1}$ is given by~\refeqn{Lyap0_dot}. Substituting~\refeqn{mevdot} to~\refeqn{vdot_mid1}, and rearranging it result in
\begin{align}
\dot{\mathcal{V}}_1=&(mc_1-k_v)e_v^Te_v-c_1 k_x e_x^T e_x-c_1k_v e_x^T e_v\nonumber \\
&-(e_v+c_1e_x)^T\mathcal{X}-(e_v+c_1e_x)^T\tilde \Delta_1+\dot{\mathcal{V}}_{0_1}.
\end{align}

Since~\refeqn{a1a2}, the last term of the above expression is the same as~\refeqn{B}. Substituting its equivalent expression given by~\refeqn{last_term}, and substituting~\refeqn{normX} result in
\begin{align}
\dot{\mathcal{V}}_1\leq&-(k_v(1-\beta)-mc_1)e_v^Te_v-c_1k_x(1-\beta)e_x^Te_x\nonumber \\&+c_1k_v(1+\beta)\norm{e_x}\norm{e_v}-\frac{\kappa_1}{2}||\tilde Z_1||^2+\frac{\kappa_1 Z_{M_1}^2}{2}\nonumber\\
&+\norm{a_1}\{C_{1_1}+C_{2_1}||\tilde Z_1|| (1+\norm{x_{1_1}}+\norm{x_{2_1}})\}\nonumber \\&+\norm{e_R}\{B_1(c_1\norm{e_x}+\norm{e_v})+k_xe_{x_{max}}\norm{e_v}\},\label{eqn:Vdot1_mid0}
\end{align}
where $\norm{e_x}\leq e_{x_{\max}}$, for a fixed positive constant $e_{x_{\max}}$, is used for simplifying multiplication of the three vectors, $\norm{e_R}\norm{e_x}\norm{e_v}$.

It is assumed that the desired trajectory is bounded such that $\norm{x_d}\leq x_{d_{max}},\,\norm{\dot x_d}\leq v_{d_{max}}$, where $x_{d_{max}},\, v_{d_{max}}>0$. These as well as~\refeqn{ex}--\refeqn{ev}, and~\refeqn{x12_1} imply $\norm{x_1}\leq\norm{e_x}+x_{d_{max}}$, $\norm{x_2}\leq\norm{e_v}+v_{d_{max}}$. Substituting these in~\refeqn{Vdot1_mid0}, expanding $a_1$, and using~\refeqn{x_y} result in
\begin{align}
\dot{\mathcal{V}}_1\leq&-(k_v(1-\beta)-mc_1)e_v^Te_v-c_1k_x(1-\beta)e_x^Te_x\nonumber \\&+c_1k_v(1+\beta)\norm{e_x}\norm{e_v}-\frac{\kappa_1}{2}||\tilde Z_1||^2+\frac{\kappa_1 Z_{M_1}^2}{2}\nonumber\\
&+C_{1_1}\norm{e_v} +C_{3_1} \norm{e_v}^2+C_{4_1}\norm{e_v}\norm{\tilde Z_1}\nonumber\\
&+c_1(C_{1_1}\norm{e_x}+C_{3_1}\norm{e_x}^2+C_{4_1}\norm{e_x}\norm{\tilde Z_1})\nonumber\\&+(1+c_1)C_{3_1}\norm{e_x}\norm{e_v}\nonumber \\&+\norm{e_R}\{B_1(c_1\norm{e_x}+\norm{e_v})+k_xe_{x_{max}}\norm{e_v}\},\label{eqn:Vdot1_mid1}
\end{align}
where $C_{1_1}\geq 2W_{M_1}+\epsilon,\,C_{2_1}\geq 0.25 (V_{M_1}+W_{M_1}),\,C_{3_1}\geq C_{2_1} Z_{M_1},\,C_{4_1}\geq C_{2_1}(1+x_{d_{max}}+v_{d_{max}})$. 

Using~\refeqn{ax2+bx}, and defining $k_{v_{\beta}}\equiv k_v(1-\beta)-mc_1-C_{3_1}$ and $k_{x_\beta}\equiv k_x(1-\beta)-C_{3_1}$, the following expressions are rearranged as
\begin{align}
-&k_{x_\beta}e_x^Te_x+C_{1_1}e_x\leq-\frac{k_{x_\beta}}{2}e_x^Te_x+\frac{C_{1_1}^2}{2k_{x_\beta}},\label{eqn:one1}\\
-&k_{v_{\beta}}e_v^Te_v+C_{1_1}e_v\leq-\frac{k_{v_{\beta}}}{2}e_v^Te_v+\frac{C_{1_1}^2}{2k_{v_{\beta}}},\label{eqn:one2}
\end{align}
Substituting~\refeqn{one1}--\refeqn{one2} into~\refeqn{Vdot1_mid1} results in 
\begin{align}
\dot{\mathcal{V}}_1\leq&-\frac{c_1k_{x_\beta}}{2}e_x^Te_x-\frac{k_{v_\beta}}{2}e_v^Te_v-\frac{\kappa_1}{2}||\tilde Z_1||^2+k_{xv}\norm{e_x}\norm{e_v}\nonumber \\
&+C_{4_1}||e_v||||\tilde Z_1||+c_1C_{4_1}||e_x||||\tilde Z_1||+C_{5_1}\nonumber\\
&+\norm{e_R}\{c_1B_1\norm{e_x}+(B_1+k_xe_{x_{max}})\norm{e_v}\},\label{eqn:V1dot}
\end{align}
where $k_{xv}=c_1[(1+\beta)k_v+C_{3_1}]+C_{3_1}$, 
 $C_{5_1}=\frac{c_1C_{1_1}^2}{2k_{x_\beta}}+\frac{C_{1_1}^2}{2k_{v_\beta}}+\frac{\kappa_1 Z_{M_1}^2}{2}$.

\subsection{Attitude Error Dynamics} \label{Apen:attitude}
Here, we analyze the error dynamics for the attitude tracking command. The attitude error dynamics are defined as
\begin{gather}
	\dot{e}_R=\frac{1}{2}(\tr{R^T R_c}I_{3\times 3}-R^TR_c)e_\Omega \equiv C(R_c^TR)e_\Omega,\label{eqn:eRdot}\\
	J\dot{e_\Omega}=-k_Re_R-k_\Omega e_\Omega-\tilde\Delta_2,\label{eqn:eomegadot}\\
	\dot \Psi(R,R_c)= e_R^Te_\Omega,\label{eqn:Psidot},\\
	||C(R_c^TR)||\leq1.\label{eqn:CRCTR}
\end{gather}
Equations \refeqn{eRdot} and~\refeqn{Psidot}--\refeqn{CRCTR} are presented in~\cite{LeeLeoPICDC10}, and \refeqn{eomegadot} is derived from taking derivative of~\refeqn{eomega} and substituting~\refeqn{E2} and~\refeqn{Mc}.

For a non-negative constant $c_2$, the Lyapunov function for the attitude dynamics is defined as
\begin{gather}
	\mathcal{V}_2=\frac{1}{2} e_\Omega^T J e_\Omega+k_R \Psi(R,R_c)+c_2 e_R^T Je_\Omega+\mathcal{V}_{0_2},\label{eqn:V2}
\end{gather}
where $\mathcal{V}_{0_2}$ is given by~\refeqn{Lyap0}, and 
\begin{gather}
	\frac{1}{2}\norm{e_R}^2\leq\Psi(R,R_c)\leq\frac{1}{2-\psi_1}\norm{e_R}^2,
\end{gather}
with $\psi_1=\frac{1}{k_R}[\frac{1}{2}e_\Omega(0)^TJe_\Omega(0)+k_R\Psi(R(0),R_c(0))]$. The bounds of $\mathcal{V}_2$ are
\begin{gather}
	\lambda_m(\mathcal{M}_{21}) ||\mathcal{Z}_{21}||^2+\mathcal{V}_{0_2}\leq\mathcal{V}_2 \leq \lambda_M(\mathcal{M}_{22}) ||\mathcal{Z}_{21}||^2+\mathcal{V}_{0_2}, \label{eqn:V2bound}
\end{gather}
where \begin{gather}
	\mathcal{M}_{21}=\frac{1}{2}\begin{bmatrix} k_R & -c_2 \lambda_{M_J}
		\\ -c_2 \lambda_{M_J} & \lambda_{m_J} \end{bmatrix}, 
	\mathcal{M}_{22}=\frac{1}{2}\begin{bmatrix} \frac{2k_R}{2-\psi_1} & c_2\lambda_{M_J}
		\\ c_2\lambda_{M_J} & \lambda_{M_J}\end{bmatrix},\label{eqn:M2}\\
	\mathcal{Z}_{21}=[||e_R||,||e_\Omega||]^T,\label{eqn:Z21}
\end{gather}
with $\lambda_{m_J}=\lambda_m(J),\lambda_{M_J}=\lambda_M(J)$. Provided that $c_2$ is sufficiently small to satisfy the following inequality, the matrices $\mathcal{M}_{21},\mathcal{M}_{22}$ are positive-definite,
\begin{gather}
	c_2<\min\{\frac{\sqrt{k_R\lambda_{m_J}}}{\lambda_{M_J}},  \sqrt{\frac{2k_R}{\lambda_M(2 - \psi_1)}}\},\label{eqn:c20}
\end{gather}
where $\psi_1 < 2$. 

The time-derivate of the $\Lya$ function is given by
\begin{align}
	\dot{\mathcal{V}}_2=& (e_\Omega+c_2e_R)^T J \dot e_\Omega+k_R \dot \Psi(R,R_c)+c_2 \dot e_R^T Je_\Omega\nonumber\\&+\dot{\mathcal{V}}_{0_2},
\end{align}
where $\dot{\mathcal{V}}_{0_2}$ is given by~\refeqn{Lyap0_dot}. Substituting error dynamics~\refeqn{eRdot}--\refeqn{CRCTR},~\refeqn{E2}, and~\refeqn{Mc}, results in
\begin{align}
	\dot{\mathcal{V}}_2&= (e_\Omega+c_2e_R)^T (-k_Re_R-k_\Omega e_\Omega-\tilde \Delta_2)\nonumber \\
	&\quad +k_R e_R^T e_\Omega+c_2 C(R_c^TR)e_\Omega^T Je_\Omega+\dot{\mathcal{V}}_{0_2}.
\end{align}
From~\refeqn{quad},~\refeqn{CRCTR}, and~$\norm{J}\leq\lambda_{M_J}$,
\begin{align}
	\dot{\mathcal{V}}_2\leq&-c_2k_R e_R^Te_R+c_2 k_\Omega ||e_R||||e_\Omega||-(k_\Omega-c_2\lambda_{M_J})e_\Omega^Te_\Omega\nonumber \\
	&-(e_\Omega+c_2e_R)^T(\tilde \Delta_2)+\dot{\mathcal{V}}_{0_2}.
\end{align}
Since~\refeqn{a1a2}, the last term of this expression is the same as~\refeqn{B}. Substituting its equivalent expression given by~\refeqn{last_term}, results in
\begin{align}
\dot{\mathcal{V}}_2\leq&-c_2k_R e_R^Te_R+c_2 k_\Omega ||e_R||||e_\Omega||-(k_\Omega-c_2\lambda_{M_J})e_\Omega^Te_\Omega\nonumber \\
&-\frac{\kappa_2}{2}||\tilde Z_2||^2+\frac{\kappa_2 Z_{M_2}^2}{2}\nonumber\\&+\norm{a_2}\{C_{1_2}+C_{2_2}||\tilde Z_2|| (1+\norm{E(R)}+\norm{\Omega})\}.\label{eqn:vdot2_mid0}
\end{align}

It is assumed that $\norm{\dot{\bar{\Delta}}_1}\leq\delta_2$ and the desired trajectory is designed such that $\norm{\dddot{x}_d}\leq\delta_3$, where $\delta_2,\,\delta_3>0$. Thus $\norm{m\dddot{x}_d+\dot{\bar{\Delta}}_1}\leq B_2$. Taking derivative of~\refeqn{A}, it can be shown that
\begin{gather}
\norm{\dot{A}}\leq k_x\norm{e_v}+k_v \norm{\dot{e}_v}+B_2.\label{eqn:Adot_bound}
\end{gather}
Since~\refeqn{Rc}, $\dot R_c=[\dot b_{1c},\dot b_{2c},\dot b_{3c}]$, where
\begin{gather}\dot b_{1c}=\dot b_{2c}\times b_{3c}+b_{2c}\times \dot b_{3c},\label{eqn:b1cdot}\\
\dot b_{2c}=-\frac{\dot C}{||C||}+\frac{C(C.\dot C)}{||C||^3},\label{eqn:b2cdot}\\
\dot b_{3c}=-\frac{\dot A}{||A||}+\frac{A(A.\dot A)}{||A||^3}.\label{eqn:b3cdot}
\end{gather}
Since~\refeqn{acceleration_bound},~\refeqn{Adot_bound}, and~\refeqn{b3cdot}
\begin{gather}
\norm{\dot b_{3c}}\leq 2 \frac{k_x\norm{e_v}+k_v \norm{\dot{e}_v}+B_2}{k_x\norm{e_x}+k_v \norm{e_v}+B_1}\equiv B_3.\label{eqn:ndotb_3c}
\end{gather}
It is assumed that the desired trajectory is designed such that $\norm{\dot{b}_{1_d}}\leq \delta_4$, where $\delta_4>0$. Taking derivative of~\refeqn{C} and using~\refeqn{ndotb_3c}, it can be shown that $\norm{\dot C}\leq B_3+\delta_4$. Since~\refeqn{C},~\refeqn{b3cdot}, and $\norm{C}\leq 1$,
\begin{gather}
\norm{\dot b_{2c}}\leq 2(B_3+\delta_4),\label{eqn:ndotb_2c}
\end{gather}
From~\refeqn{b1cdot},~\refeqn{ndotb_3c}--\refeqn{ndotb_2c}
\begin{gather}
\norm{\dot b_{1c}}\leq 3B_3+2\delta_4.\label{eqn:ndotb_1c}
\end{gather}
Thus, from~\refeqn{ndotb_3c}--\refeqn{ndotb_1c}, 
it can be shown that $\norm{\dot{R}_c}\leq B_4$, where $B_4$ is positive. Since~\refeqn{EC3}, $\norm{\Omega_c}\leq B_4$. Since~\refeqn{eomega}, $\norm{\Omega}\leq\norm{e_\Omega}+B_4$.

$\norm{E(R)}\leq E_{max}$, where $ E_{max}$ is positive, and substituting these in~\refeqn{vdot2_mid0}, expanding $a_2$, using~\refeqn{x_y} result in
\begin{align}
	\dot{\mathcal{V}}_2\leq&-c_2k_R e_R^Te_R+c_2 k_\Omega ||e_R||||e_\Omega||-(k_\Omega-c_2\lambda_{M_J})e_\Omega^Te_\Omega\nonumber \\
	&-\frac{\kappa_2}{2}||\tilde Z_2||^2+\frac{\kappa_2 Z_{M_2}^2}{2}\nonumber\\&+C_{1_2}\norm{e_\Omega} +C_{3_2} \norm{e_\Omega}^2+C_{4_2}\norm{e_\Omega}\norm{\tilde Z_2}\nonumber\\
	&+c_2(C_{1_2}\norm{e_R}+C_{4_2}\norm{e_R}\norm{\tilde Z_2})\nonumber\\&+c_2C_{3_2}\norm{e_R}\norm{e_\Omega},\label{eqn:vdot2_mid1}
\end{align}
where $C_{1_2}\geq 2W_{M_2}+\epsilon_2,\,C_{2_2}\geq 0.25 (V_{M_2}+W_{M_2}),\,C_{3_2}\geq C_{2_2} Z_{M_2},\,C_{4_2}\geq C_{2_2}(1+E_{max}+B_4)$.
Using~\refeqn{ax2+bx}, the following expressions are rearranged as
\begin{align}
	-&k_{R} e_R^Te_R+C_{1_2}||e_R||\leq-\frac{k_{R}}{2}e_R^Te_R+\frac{C_{1_2}^2}{2k_{R}},\label{eqn:two1}\\
	-&k_{\Omega_\beta}e_\Omega^Te_\Omega+C_{1_2}||e_\Omega||\leq-\frac{k_{\Omega_\beta}}{2}e_\Omega^Te_\Omega+\frac{C_{1_2}^2}{2k_{\Omega_\beta}},\label{eqn:two2}
\end{align}
where $k_{\Omega_\beta}=k_\Omega-c_2\lambda_{M_J}-C_{3_2}$.
Then substituting~\refeqn{two1}--\refeqn{two2} in~\refeqn{vdot2_mid1}
\begin{align}
\dot{\mathcal{V}}_2\leq&-\frac{c_2k_{R}}{2}e_R^Te_R-\frac{k_{\Omega_\beta}}{2}e_\Omega^Te_\Omega-\frac{\kappa_2}{2}||\tilde Z_2||^2\nonumber \\
&+k_{R\Omega} ||e_R||||e_\Omega||\nonumber\\
&+C_{4_2}||e_\Omega||||\tilde Z_2||+c_2C_{4_2}||e_R||||\tilde Z_2||+C_{5_2}.\label{eqn:V2dot0}
\end{align}
where $k_{R\Omega}=c_2(\kappa_\Omega+C_{3_2}),\, C_{5_2}=\frac{c_2C_{2_1}^2}{2k_{R}}+\frac{C_{2_1}^2}{2k_{\Omega_\beta}}+\frac{\kappa_2 Z_{M_2}^2}{2}$. 


\subsection{Stability Proof for Quadrotor Dynamics} \label{Apen:quadrotor}
Here, we combine the position error dynamics and the attitude error dynamics to show the stability properties of the controlled quadrotor. The Lyapunov function is chosen as $\mathcal{V}=\mathcal{V}_1+\mathcal{V}_2$, where $\mathcal{V}_1,\mathcal{V}_2$ are given by~\refeqn{V1},~\refeqn{V2}. From~\refeqn{V1bound} and~\refeqn{V2bound}, the bound on $\mathcal{V}$ is given by
\begin{align}
 \lambda_m&(\mathcal{M}_{11}) ||\mathcal{Z}_{11}||^2+\lambda_m(\mathcal{M}_{21}) ||\mathcal{Z}_{21}||^2+\mathcal{V}_{0_1}+\mathcal{V}_{0_2}\leq \mathcal{V}\nonumber\\&\leq \lambda_M(\mathcal{M}_{12}) ||\mathcal{Z}_{11}||^2+\lambda_M(\mathcal{M}_{22}) ||\mathcal{Z}_{21}||^2+\mathcal{V}_{0_1}+\mathcal{V}_{0_2}.
\end{align}
The upper bound can be rewritten as 
\begin{align}
\mathcal{V}\leq
&\frac{1}{2} \mathcal{Z}_{1}^T \mathcal{N}_{1}^\prime \mathcal{Z}_{1}+\frac{1}{2} \mathcal{Z}_{2}^T \mathcal{N}_{2}^\prime \mathcal{Z}_{2}+\frac{1}{2}\mathcal{Z}_{3}^T\mathcal{N}_{3}^\prime\mathcal{Z}_{3},
\end{align}
where
\begin{gather*}
\mathcal{N}_{1}^\prime=\begin{bmatrix}\frac{c_2 k_{R}}{2}&mc_1&0\\mc_1&\frac{m}{2}&0\\0&0&\frac{1}{\min\{\gamma_{w_1},\gamma_{v_1}\}}\end{bmatrix},\nonumber\\
\mathcal{N}_{2}^\prime=\begin{bmatrix}\frac{1}{2-\psi_2}&c_2\lambda_{M_J}&0\\c_2\lambda_{M_J}&\lambda_{M_J}&0\\0&0&\frac{1}{\min\{\gamma_{w_2},\gamma_{v_2}\}}\end{bmatrix},\nonumber\\
\mathcal{N}_{3}^\prime=\begin{bmatrix}\frac{k_x}{2}&0&0\\0&\frac{m}{2}&0\\0&0&\frac{1}{2-\psi_2}\end{bmatrix},\label{eqn:N1prime}\\
\mathcal{Z}_{1}=[||e_x||,\norm{e_v},||\tilde Z_1||]^T,\quad  \mathcal{Z}_{2}=[\norm{e_R},||e_\Omega||,||\tilde Z_2||]^T,\\
\mathcal{Z}_{3}=[\norm{e_x},||e_v||,||e_R||]^T.
\end{gather*}

The derivative of the $\Lya$ function is $\dot{\mathcal{V}}=\dot{\mathcal{V}}_1+\dot{\mathcal{V}}_2$. 
From~\refeqn{V1dot} and~\refeqn{V2dot0}, it can be written as
\begin{align}
\dot{\mathcal{V}}\leq
&-\frac{1}{2} \mathcal{Z}_{1}^T \mathcal{N}_{1} \mathcal{Z}_{1}-\frac{1}{2} \mathcal{Z}_{2}^T \mathcal{N}_{2} \mathcal{Z}_{2}-\frac{1}{2}\mathcal{Z}_{3}^T\mathcal{N}_{3}\mathcal{Z}_{3} +C_{5},
\end{align}
where $C_5=C_{5_1}+C_{5_2}$, and
\begin{gather}
\mathcal{N}_{1}=\begin{bmatrix}\frac{c_1 k_{x_\beta}}{2}&-\frac{k_{xv}}{2}&-c_1C_{4_1}\\-\frac{k_{xv}}{2}&\frac{k_{v_\beta}}{2}&-C_{4_1}\\-c_1C_{4_1}&-C_{4_1}&\kappa_1\end{bmatrix},\label{eqn:N1}\\
\mathcal{N}_{2}=\begin{bmatrix}\frac{c_2 k_{R}}{2}&-k_{R\Omega}&-c_2C_{4_2}\\-k_{R\Omega}&k_{\Omega_\beta}&-C_{4_2}\\-c_2C_{4_2}&-C_{4_2}&\kappa_2\end{bmatrix},\label{eqn:N2}\\
\mathcal{N}_{3}=\begin{bmatrix}\frac{c_1 k_{x_\beta}}{2}&-\frac{k_{xv}}{2}&-c_1B_1\\-\frac{k_{xv}}{2}&\frac{c_1 k_{v_\beta}}{2}&-B_1-k_xe_{x_{max}}\\-c_1B_1&-B_1-k_xe_{x_{max}}&\frac{c_2 k_{R}}{2}\end{bmatrix}.\label{eqn:N3}
\end{gather}
If $c_1$ and $c_2$ are chosen such that $\mathcal{N}_{1} ,\,\mathcal{N}_{2} ,\,\mathcal{N}_{3}$ become positive definite then the right hand side of the inequality reduces to
\begin{align}
\dot{\mathcal{V}}\leq-\nu \mathcal{V}+C_{5},\label{eqn:Vdot_final}
\end{align}
where $\nu=\min\{\frac{\lambda_{m}(\mathcal{N}_{1})}{\lambda_{M}(\mathcal{N}_{1}^\prime)},\, \frac{\lambda_{m}(\mathcal{N}_{2})}{\lambda_{M}(\mathcal{N}_{2}^\prime)},\,\frac{\lambda_{m}(\mathcal{N}_{3})}{\lambda_{M}(\mathcal{N}_{3}^\prime)}\}$. If $\mathcal{V}>\frac{C_{5}}{\nu}$, then $\dot{\mathcal{V}}<0$. Therefore, according to~\cite{khalil1996noninear}, $e_x,e_v,e_R,e_\Omega,\tilde Z_1$ and $\tilde Z_2$ are bounded and converge exponentially to the set $\mathcal{D}$
\begin{align}
\mathcal{D}&=\{e_x,e_v,e_R,e_\Omega \in \Ree^3,\tilde Z_1 \in \Ree^{N_{1_1}+N_{2_1}+2\times N_{2_1}+N_{3_1}},\nonumber\\& \tilde Z_2 \in \Ree^{N_{1_2}+N_{2_2}+2\times N_{2_2}+N_{3_2}}\arrowvert \norm{e_x}^2+\norm{e_v}^2+\norm{e_R}^2+\norm{e_\Omega}^2\nonumber\\
&+\frac{1}{\gamma_1}\|\tilde Z_1\|^2+\frac{1}{\gamma_2}\|\tilde Z_2\|^2\leq \frac{C_{5}}{\nu}\},\label{eqn:D}
\end{align}
where $\gamma_1=\max\{\gamma_{v_1},\gamma_{w_1}\},\,\gamma_2=\max\{\gamma_{v_2},\gamma_{w_2}\}$. By adjusting controller gains, the set can be made arbitrarily small.

\bibliography{BibMaster_new,Mahdis}
\bibliographystyle{IEEEtran}

\end{document}